\newcommand{\CC}{\mathbb{C}}
\newcommand{\NN}{\mathbb{N}}
\newcommand{\RR}{\mathbb{R}}
\newcommand{\ZZ}{\mathbb{Z}}
\newcommand{\id}{\mathrm{id}}
\newcommand{\abs}[1]{\left|#1\right|}
\newcommand{\sga}[3]{\left[ \begin{smallmatrix}
    #1 \\ #2 \end{smallmatrix} \right]_{#3}}
\newcommand{\gabi}[3]{\begin{bmatrix}
    #1\\  #2 \end{bmatrix}_{#3}}
\newcommand{\fix}{\mathrm{fix}}
\newcommand{\orb}{\mathrm{orb}}
\newcommand{\diri}[2]{D^{}_{#1}(#2)}
\newcommand{\zetfun}[1]{\zeta\left(#1\right)}
\newcommand{\ts}{\hspace*{0.5pt}} 
\newtheorem{theorem}{Theorem}[section]
\newtheorem{lemma}[theorem]{Lemma}
\newtheorem{prop}[theorem]{Proposition}
\newtheorem{corollary}[theorem]{Corollary}
\theoremstyle{definition}
\newtheorem*{rem}{Remark}
\begin{document}
\title[Fixed point and orbit counts]
{A note on the relation between \\[2mm]
fixed point and orbit count sequences}
\author{Michael Baake}
\author{Natascha Neum\"arker}
\address{Fakult\"at f\"ur Mathematik, Universit\"at Bielefeld, 
Postfach 100131, 33501 Bielefeld, Germany}
\email{$\{$mbaake,naneumae$\}$@math.uni-bielefeld.de}

\begin{abstract}
  The relation between fixed point and orbit count sequences is
  investigated from the point of view of linear mappings on the space
  of arithmetic functions. Spectral and asymptotic properties are
  derived and several quantities are explicitly given in terms of
  Gaussian binomial coefficients.
\end{abstract}
\maketitle


\section{Introduction and general setting}
Each map of an arbitrary set into itself gives rise to sequences
$a=(a_n)_{n\in\NN}$ and $c=(c_n)_{n\in\NN}$ that count the number of
periodic points with period $n$ and the orbits of length $n$,
respectively.  In many interesting cases, $a_n$ and $c_n$ are finite
for all $n\in\NN$, turning $a$ and $c$ into sequences of
non-negative integers.  They are then related by M\"obius inversion as
\begin{equation}\label{intTrans}
     a_n = \sum_{d|n} d\, c^{}_d \quad \text{and}\quad 
     c_n = \frac{1}{n}\sum_{d|n}\mu\left(\frac{n}{d}\right) a^{}_d.
\end{equation}
Here, $\mu$ denotes the M\"obius function from elementary number
theory \cite{apostol}.  Puri and Ward \cite{PuriWard} study these
relations as paired transformations between certain non-negative
integer sequences, with special emphasis on arithmetic and asymptotic
aspects.  It is the purpose of this short note, which is based on
\cite{NN}, to extend these transformations to the space of arithmetic
functions, on which they act as linear operators, and to investigate
their spectral properties and their asymptotic behaviour under
iteration, thus generalising results from \cite{PuriWard}.

Let $\mathcal{A} = \CC_{}^{\NN}$ denote the set of arithmetic
functions. Its elements can be thought of either as functions $f\!
:\,\NN\to\CC, n\mapsto f(n)$, or as sequences of complex numbers
$(f_n)_{n\in\NN}$, hence the notations $f(n)$ and $f_n$ are used in
parallel; see \cite{apostol} for details and general notation.
Clearly, $\mathcal{A}$ is a complex vector space with respect to
component-wise addition and the usual scalar multiplication.
Additionally, it is equipped with the structure of a commutative
algebra $(\mathcal{A},+,\ast)$ with unit element, where $\ast$ denotes
\textit{Dirichlet convolution}, defined by
\[
    \bigl( f\ast g \bigr) (n) = \sum_{d|n} f(d) \, 
           g\left(\frac{n}{d}\right)
\]
for all $n\in\NN$.  The unit element with regard to this product is
$\delta_1 = (1,0,0,\ldots)$.  When $f\in\mathcal{A}$ is
invertible, the notation $f^{-1}$ refers to its \textit{Dirichlet
  inverse}, defined by $f\ast f^{-1} = f^{-1}\ast f = \delta_1$.
Furthermore, $f\cdot g$ denotes the pointwise product where 
$\bigl( f\!\cdot g\bigr)  (n) = f(n)\ts g(n)$ for all $n\in\NN$.  
Motivated by the transformations \eqref{intTrans}, we define
`$\fix$' and `$\orb$' as mappings from $\mathcal{A}$ into itself 
by $a\mapsto a_{}^{\prime}$, with
\begin{equation}
    a^{\prime}_n = \fix(a)_n = \sum_{d|n} d\, a^{}_d\quad 
    \text{and}\quad 
    a^{\prime}_n = \orb(a)_n = \frac{1}{n}\sum_{d|n} 
    \mu{\left(\frac{n}{d}\right)} \ts a^{}_d,
\end{equation}
respectively, for all $n\in\NN$.  Denoting by $N$ the arithmetic
function $n\mapsto n$, $\fix$ and $\orb$ can be written as
\begin{equation}\label{convRepres}
     \fix(f) = N\cdot \Bigl(\frac{1}{N}\ast f \Bigr)
     \quad \text{and}\quad 
     \orb(f) = \frac{1}{N}\cdot \bigl( \mu\ast f \bigr).
\end{equation}
This is particularly useful when considering the Dirichlet series of
$\fix(f)$ and $\orb(f)$.  In general, the Dirichlet series $D_f$ of
$f\in\mathcal{A}$ is defined by
\[
     D_f (s) = \sum_{m=1}^{\infty}  \frac{f(m)}{m^s} \ts ,
\]
which is viewed as a formal series at this stage.  This generating
function gives rise to the (formal) identities \cite{apostol}
\[
       D_{f+g}   =   D_f + D_g    \quad \text{and}\quad 
       D_{f\ast g}  = D_f \cdot D_g \ts .
\]
Using these in connection with \eqref{convRepres}, a straightforward
calculation shows
\begin{lemma}\label{dirichFix}
   Let $D_a(s)$ denote the Dirichlet series of $a\in\mathcal{A}$. Then,
\[
   \diri{\fix(a)}{s} = \zetfun{s} \diri{a}{s-1} 
   \quad \text{and} \quad
   \diri{\orb(a)}{s} = \frac{1}{\zetfun{s+1}} \, \diri{a}{s+1} \ts .
\]
where $\zeta$ is Riemann's zeta function. \hfill$\Box$
\end{lemma}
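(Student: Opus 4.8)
The plan is to reduce both identities to the two formal rules $D_{f+g}=D_f+D_g$ and $D_{f\ast g}=D_f\cdot D_g$ together with the convolution representation \eqref{convRepres}, after recording how pointwise multiplication by a power of $N$ acts on a Dirichlet series. Concretely, for any $f\in\mathcal{A}$ and any $k\in\ZZ$, the pointwise product $N^{k}\cdot f$ (the function $n\mapsto n^{k} f(n)$) satisfies $\diri{N^{k}\cdot f}{s}=\sum_{m\ge 1} m^{k} f(m)\ts m^{-s}=\diri{f}{s-k}$, simply by absorbing $m^{k}$ into the exponent; in particular $\diri{N\cdot g}{s}=\diri{g}{s-1}$ and $\diri{(1/N)\cdot g}{s}=\diri{g}{s+1}$. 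I would also record the two classical evaluations $\diri{1/N}{s}=\sum_{m\ge 1} m^{-(s+1)}=\zetfun{s+1}$ and $\diri{\mu}{s}=1/\zetfun{s}$, the latter being the Dirichlet-series form of the identity $\mu\ast\mathbf{1}=\delta^{}_1$ together with $\diri{\mathbf{1}}{s}=\zetfun{s}$.

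For the $\fix$ part I would then start from $\fix(a)=N\cdot\bigl((1/N)\ast a\bigr)$ and chain the observations above: $\diri{\fix(a)}{s}=\diri{(1/N)\ast a}{s-1}=\diri{1/N}{s-1}\ts\diri{a}{s-1}=\zetfun{(s-1)+1}\ts\diri{a}{s-1}=\zetfun{s}\ts\diri{a}{s-1}$. For the $\orb$ part, starting from $\orb(a)=(1/N)\cdot(\mu\ast a)$ gives $\diri{\orb(a)}{s}=\diri{\mu\ast a}{s+1}=\diri{\mu}{s+1}\ts\diri{a}{s+1}=\dfrac{1}{\zetfun{s+1}}\ts\diri{a}{s+1}$, as claimed.

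Since the whole computation lives at the level of formal Dirichlet series, there are no convergence subtleties, and the only ingredient that is not literally one of the quoted identities is the shift-by-$N^{k}$ rule, which is itself entirely elementary. Accordingly, the "hard part'' is merely bookkeeping: making sure the argument shifts compose correctly, e.g.\ that the $+1$ coming from $\diri{1/N}{\,\cdot\,}$ cancels the $-1$ coming from the outer pointwise factor $N$ so that exactly $\zetfun{s}$ (and not $\zetfun{s+1}$) appears in the first formula.
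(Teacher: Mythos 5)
Your computation is correct and is precisely the ``straightforward calculation'' the paper alludes to (the paper gives no further detail): it combines the convolution representation \eqref{convRepres} with the formal rules $D_{f\ast g}=D_f D_g$, the classical evaluations $D_{1/N}(s)=\zeta(s+1)$ and $D_\mu(s)=1/\zeta(s)$, and the observation that pointwise multiplication by $N^{k}$ shifts the argument of the Dirichlet series by $k$. The arithmetic checks out: the $+1$ from $D_{1/N}$ cancels the $-1$ from the outer factor $N$, yielding exactly $\zeta(s)D_a(s-1)$, and the $\orb$ case is analogous.
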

For a discussion of the identities from Lemma~\ref{dirichFix} in a
different context, and for interesting recent development on Dirichlet
series as orbit counting generating functions, we refer the reader to
\cite[Secs.~ 2 and 10]{finiteCombRank}.  We now turn our attention to
the structure of $\fix$ and $\orb$ as linear mappings on $\mathcal{A}$.

\section{Spectral properties}

When studying $\fix$ on $\mathcal{A}$,
it is natural to ask for eigenvalues and  eigenvectors.  
Let $x$ be an arithmetic function that satisfies the fixed
point condition $\fix(x)_n = \sum_{d|n} d\, x_d = x_n$ for all
$n\in\NN$.  Solving for $x_n$ yields the recursive relation
\begin{equation}\label{recursion}
    x_n = \frac{1}{1-n}\sum_{n>d | n} d\, x_d
\end{equation}
for $n> 1$, so that all $x_n$ are uniquely determined from $x^{}_{1}$.
The $x_n$ depend linearly on $x_1$, which is the only degree of
freedom in solving \eqref{recursion}, wherefore the eigenspace for the
eigenvalue $1$ is one-dimensional.  Setting $x_1 = a_1 = 1$, we denote
the resulting solution of \eqref{recursion} by $ a =
(a^{}_1,a^{}_2,a^{}_3,\ldots)$.  Since $\fix$ maps multiplicative
sequences to multiplicative sequences, it is an interesting question
to what extent this is reflected by $a$.
\begin{lemma}\label{multiplicativity}
     Let $a$ satisfy $\fix(a) = a$, with $a^{}_1 = 1$.  The eigenvector
    $a$ is multiplicative, so that $a^{}_{nm} = a^{}_n\cdot a^{}_m$ for
    all $n,m \geq 1$ with $(m,n) = 1$.
\end{lemma}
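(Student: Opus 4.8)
The plan is to prove multiplicativity by strong induction on the product $nm$, inserting the coprime factorisation of divisors into the defining recursion \eqref{recursion}. Precisely, I would prove by induction on $k\in\NN$ the statement: \emph{for every factorisation $k=nm$ with $(n,m)=1$ one has $a_{nm}=a_n a_m$}. The case $k=1$, and more generally any case in which $n=1$ or $m=1$, is immediate because $a_1=1$.

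For the inductive step, fix coprime $n,m\geq 2$ with $nm=k$ and assume the claim for all smaller products. Since $(n,m)=1$, each divisor $d$ of $nm$ factors uniquely as $d=d_1 d_2$ with $d_1|n$, $d_2|m$, $(d_1,d_2)=1$, and the constraint $d<nm$ is equivalent to $(d_1,d_2)\neq(n,m)$; in that case $d_1 d_2<nm$, so the induction hypothesis applies and gives $a_d=a_{d_1}a_{d_2}$. Feeding this into \eqref{recursion} at $nm$ (valid since $nm>1$),
\[
   (1-nm)\,a_{nm}=\sum_{\substack{d|nm\\ d<nm}} d\,a_d
   =\sum_{\substack{d_1|n,\; d_2|m\\ (d_1,d_2)\neq(n,m)}} d_1 d_2\, a_{d_1}a_{d_2}
   =\Bigl(\sum_{d_1|n} d_1 a_{d_1}\Bigr)\Bigl(\sum_{d_2|m} d_2 a_{d_2}\Bigr)-nm\,a_n a_m .
\]
The two bracketed sums are $\fix(a)_n=a_n$ and $\fix(a)_m=a_m$ by hypothesis, so the right-hand side equals $a_n a_m-nm\,a_n a_m=(1-nm)\,a_n a_m$; dividing by $1-nm\neq 0$ gives $a_{nm}=a_n a_m$ and closes the induction.

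I do not expect a real obstacle: the computation is short, and the only points that need care are organisational — running the induction on the \emph{value} $nm$ so that the hypothesis is available for every product $d_1 d_2$ that occurs, checking that each divisor of $nm$ splits across the coprime factors in exactly one way, and matching ``proper divisor of $nm$'' with the removal of the single pair $(n,m)$. An alternative that connects more directly with the Gaussian binomial formulas later on is to exploit uniqueness of the eigenvector (already established via \eqref{recursion}): build a multiplicative candidate $\tilde a$ by setting $\tilde a(p^0)=1$ and $(1-p^j)\tilde a(p^j)=\sum_{i=0}^{j-1}p^i\tilde a(p^i)$ on prime powers, extend multiplicatively, note that $\fix(\tilde a)=\tilde a$ then holds on prime powers by construction and hence everywhere since $\fix$ preserves multiplicativity, and conclude $a=\tilde a$.
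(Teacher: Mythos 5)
Your main argument is correct and is essentially the paper's own proof: strong induction on the value of the product $nm$, factor each proper divisor of $nm$ uniquely as $d_1 d_2$ with $d_1\mid n$, $d_2\mid m$ coprime, apply the induction hypothesis, factor the sum into $\fix(a)_n\,\fix(a)_m - nm\,a_n a_m$, and use $\fix(a)=a$ to conclude; you only make the organisation slightly more explicit (induction on the integer $nm$, and the observation $1-nm\neq 0$). The alternative you sketch at the end — define a multiplicative candidate on prime powers, extend multiplicatively, use that $\fix$ preserves multiplicativity so the fixed-point equation propagates from prime powers to all $n$, then invoke uniqueness of the $\lambda=1$ eigenvector — is a genuinely different and clean route that the paper does not take, but since it is offered only as a secondary option the comparison reduces to: your primary proof coincides with the source.
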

\begin{proof}
  In order to show multiplicativity by induction on the index, we
  first note that the claim is true for trivial decompositions such as
  $n\cdot 1$, because $a^{}_1 = 1$.  In particular, it is then true
  for all natural numbers $n \leq 5 = 2\cdot 3 - 1$.  Let $(m,n) = 1$
  with $mn > 1$ and suppose the $a_k$ satisfy the multiplicativity
  property for all $k \leq mn -1$ (so that $a^{}_k = a^{}_{d}\,
  a^{}_{d\ts '}$ for all $d,d'$ with $(d,d\ts ') = 1$ and $d\, d\ts ' =
  k$).  This implies
\begin{eqnarray*}
     a^{}_{mn} &=& \frac{1}{1-mn}\sum_{\substack{d|mn \\d < mn}} 
       d\, a^{}_d = \frac{1}{1-mn}\sum_{\substack{d=d_md_n\\ d_m|m,\ d_n|n
       \\ d_md_n < mn}}(d_m a^{}_{d_m})\, (d_n a^{}_{d_n})\\
    &=&\frac{1}{1-mn}\Biggl(\Bigl(\underbrace{\sum_{d|m}d\ts  a^{}_d}_{=a^{}_m}
	\underbrace{\sum_{d\ts ' |n}d\ts\ts ' a^{}_{d\ts '}}_{=a^{}_n}\Bigr)- 
       mn\,a^{}_m a^{}_n \Biggr) = a^{}_m\, a^{}_n \ts .
\end{eqnarray*}
This argument shows the induction step from $mn - 1$ to $mn$. In general,
the step from $N$ to $N+1$ is trivial when $N+1$ is prime, and of the above
form otherwise, which completes the induction.
\end{proof}
For the remainder of this section, $a$ always denotes the unique
solution of $\fix(a) = a$ with $a_1 = 1$.  The recursion in
\eqref{recursion} runs over all proper divisors of $n$ and yields an
explicit expression for $a^{}_{p^r}$ with $p$ prime and $r\in\NN$.  By
multiplicativity, this extends to a closed formula for $a_n$.
\begin{prop}\label{productRepres1}
  Let $n > 1$ with prime decomposition $n = p_1^{r_1}\cdot\ldots\cdot
  p_s^{r_s}$. Then, the $n$-th entry of the eigenvector $a^{}$ of
  Lemma~\ref{multiplicativity} reads
\[
     a_n = \prod_{k=1}^{s} \prod_{\ell=1}^{r_k}
     \frac{1}{1-p_k^\ell} \ts .
\]
\end{prop}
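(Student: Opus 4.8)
The plan is to reduce to prime powers via the multiplicativity established in Lemma~\ref{multiplicativity}, and then to verify the prime-power formula by induction on the exponent. Since $a$ is multiplicative and $n = p_1^{r_1}\cdots p_s^{r_s}$ with the $p_k$ distinct, one has $a_n = \prod_{k=1}^{s} a^{}_{p_k^{r_k}}$, so it suffices to prove that $a^{}_{p^r} = \prod_{\ell=1}^{r}\frac{1}{1-p^\ell}$ for every prime $p$ and every $r\in\NN$.

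First I would specialise the recursion \eqref{recursion} to $n = p^r$. The proper divisors of $p^r$ are precisely $1, p, \dots, p^{r-1}$, so \eqref{recursion} reads $a^{}_{p^r} = \frac{1}{1-p^r}\sum_{j=0}^{r-1} p^j a^{}_{p^j}$. Introducing the shorthand $P_m = \prod_{\ell=1}^{m}\frac{1}{1-p^\ell}$ (with $P_0 = 1$), the claim to be shown is $a^{}_{p^r} = P_r$, and for the induction step one may assume $a^{}_{p^j} = P_j$ for all $j < r$.

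The key step is the telescoping identity $\sum_{j=0}^{m} p^j P_j = P_m$, valid for all $m\geq 0$, which itself follows by a one-line induction: from $P_m = P_{m-1}/(1-p^m)$ one gets $P_{m-1} = (1-p^m)\ts P_m$, whence $P_{m-1} + p^m P_m = P_m$, advancing the partial-sum identity from $m-1$ to $m$. Applying this with $m = r-1$ gives $\sum_{j=0}^{r-1} p^j a^{}_{p^j} = \sum_{j=0}^{r-1} p^j P_j = P_{r-1}$, and therefore $a^{}_{p^r} = P_{r-1}/(1-p^r) = P_r$, which closes the induction on $r$. Reassembling via multiplicativity then yields the stated product formula.

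I do not anticipate a genuine obstacle here; the only thing to be spotted is the telescoping identity for the partial sums of $p^j P_j$, after which the argument is routine. One should nevertheless check the base cases carefully ($r = 0$, i.e.\ $a^{}_1 = 1$ read as an empty product, and $r = 1$, giving $a^{}_p = \frac{1}{1-p}$) to ensure the indexing of the products matches the convention used in the statement.
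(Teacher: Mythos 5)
Your proposal is correct and follows essentially the same route as the paper: reduce to prime powers via Lemma~\ref{multiplicativity}, then prove $a^{}_{p^r} = \prod_{\ell=1}^{r}(1-p^\ell)^{-1}$ by induction on $r$. The paper merely states ``induction on the exponent $r$ leads to'' the formula without elaboration, whereas you helpfully make the induction explicit by isolating the telescoping identity $\sum_{j=0}^{m} p^j P_j = P_m$ that drives the step.
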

\begin{proof}
  Observing that $a^{}_p = \frac{a^{}_1}{1-p}$, induction on the
  exponent $r$ leads to $a^{}_{p^r} = \prod_{\ell = 1}^{r} (1 -
  p^{\ell})^{-1}$, from which the statement follows by
  Lemma~\ref{multiplicativity}.
\end{proof}

The sequence of denominators is now entry \texttt{A153038} of the
Online Encyclopedia of Integer Sequences (OEIS) \cite{OEIS}.  In
particular, $\abs{a_n} \leq 1$ for all $n$, and
Proposition~\ref{productRepres1} suggests that the $a^{}_n$ become
`asymptotically small' in some sense, as $n\to\infty$.  Indeed, with
\[
   \ell^{\ts q} = \bigl\{f\in\CC_{}^{\NN} \, \big|
   \sum_{n=1}^{\infty}|f_n|^q < \infty \bigr\} ,
\]
 one obtains the following result.
\begin{prop}\label{ell1pluseps}
  The eigenvector $a$ is an element of $\ell^{1+\varepsilon}$ for all
  $\varepsilon > 0$.
\end{prop}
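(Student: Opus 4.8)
The plan is to exploit the multiplicative structure from Proposition~\ref{productRepres1} to express the $\ell^q$-norm of $a$ (with $q = 1+\varepsilon$) as an Euler product, and then to show this product converges. Writing $q = 1+\varepsilon$, we have
\[
   \sum_{n=1}^{\infty} \abs{a_n}^q
   = \prod_{p \text{ prime}} \Bigl( 1 + \sum_{r=1}^{\infty} \abs{a_{p^r}}^q \Bigr)
   = \prod_{p} \Bigl( 1 + \sum_{r=1}^{\infty} \prod_{\ell=1}^{r} \frac{1}{\abs{1-p^\ell}^q} \Bigr),
\]
where the rearrangement into an Euler product is justified once we know absolute convergence, so strictly this identity should be read as the statement to be proven (or one works with finite partial products over $p \le X$ and passes to the limit). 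An infinite product $\prod_p (1 + b_p)$ with $b_p \ge 0$ converges if and only if $\sum_p b_p < \infty$, so the task reduces to estimating $b_p := \sum_{r\ge 1}\prod_{\ell=1}^r \abs{1-p^\ell}^{-q}$ and showing $\sum_p b_p < \infty$.

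The key estimate is that $b_p$ is dominated by its first term up to a constant. For $p \ge 2$ and $\ell \ge 1$ one has $\abs{1-p^\ell} = p^\ell - 1 \ge p^\ell/2$, hence $\prod_{\ell=1}^r \abs{1-p^\ell}^{-q} \le 2^{rq} p^{-q r(r+1)/2} \le 2^{rq} p^{-qr}$ for $r \ge 1$. Therefore
\[
   b_p \;\le\; \sum_{r=1}^{\infty} \Bigl(\frac{2^q}{p^{q}}\Bigr)^{r}
   \;=\; \frac{2^q/p^q}{1 - 2^q/p^q}
   \;\le\; \frac{C}{p^{q}}
\]
for all primes $p$ with $p^q > 2^{q+1}$, say, i.e.\ for all but finitely many $p$, with a suitable absolute constant $C$ (depending only on $q$). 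Since $q = 1+\varepsilon > 1$, the series $\sum_p p^{-q} \le \sum_{n\ge 1} n^{-q} = \zetfun{q} < \infty$ converges, so $\sum_p b_p < \infty$, and the Euler product converges. This proves $a \in \ell^{1+\varepsilon}$.

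The main obstacle is largely bookkeeping rather than conceptual: one must justify the passage from the sum over $n$ to the Euler product, which requires knowing absolute convergence in advance. The clean way around this is to work throughout with finite partial products: for any $X$, the sum $\sum_{n : p\mid n \Rightarrow p \le X} \abs{a_n}^q$ equals the finite product $\prod_{p \le X}(1 + b_p)$ exactly (by unique factorisation and multiplicativity), this is bounded above by $\prod_{p}(1 + b_p) \le \exp\bigl(\sum_p b_p\bigr) < \infty$ uniformly in $X$, and letting $X \to \infty$ shows $\sum_{n=1}^{\infty}\abs{a_n}^q \le \exp\bigl(\sum_p b_p\bigr) < \infty$. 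A minor point to handle with care is the finitely many small primes excluded from the clean geometric-series bound (in practice only $p=2$ when $q$ is close to $1$): for these, $b_p$ is simply some finite number, contributing a finite amount to $\sum_p b_p$, so they cause no difficulty.
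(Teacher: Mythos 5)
Your proof is correct, and it takes a genuinely different route from the paper's. The paper proceeds by deriving a pointwise bound: starting from $\prod_{k=1}^{r}(p^k-1) \geq (p-1)^r p^{r(r-1)/2}$, it shows $\abs{a_n} \leq \frac{c}{n}\prod_{\{i \mid r_i=1\}}\frac{p_i}{p_i-1} \leq \frac{c}{n}\cdot\frac{n}{\phi(n)}$, and then invokes the classical result $\limsup_{n\to\infty} n/(\phi(n)\log\log n) = e^{\gamma}$ (Hardy--Wright, Thm.~328) to conclude $\abs{a_n} = O(\log\log n / n)$, from which $\ell^{1+\varepsilon}$-membership follows. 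You instead exploit multiplicativity at the level of the $\ell^q$-norm itself, writing $\sum_n \abs{a_n}^q$ as the Euler product $\prod_p(1+b_p)$ and bounding $b_p = O(p^{-q})$ directly via the crude inequality $p^{\ell}-1 \geq p^{\ell}/2$, then summing over primes. Your argument is more self-contained and elementary --- it replaces the analytic input on Euler's totient with a purely multiplicative/Euler-product computation --- and it generalises transparently to any $q>1$; the paper's approach, on the other hand, delivers the sharper pointwise asymptotic $\abs{a_n} = O(\log\log n / n)$, which is additional information beyond mere $\ell^{1+\varepsilon}$-membership. One tiny caveat on your closing remark: for $q$ close to $1$ both $p=2$ \emph{and} $p=3$ fall outside the clean geometric bound (the condition $p^q>2^{q+1}$ fails for $p=3$ until roughly $q>\log 2/\log(3/2)\approx 1.71$), but as you note, finitely many primes contribute finite $b_p$, so this is harmless.
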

\begin{proof}
  Substituting $p^k -1 = (p-1)\sum_{j=0}^{k-1}p^j$ in the denominator
  of $\abs{a_{p^r}}$ leads to
\[
     \prod_{k=1}^{r} (p^k -1 )=
     (p-1)^r\prod_{k=1}^{r}{\sum_{j=0}^{k-1}p^j}\geq (p-1)^r
     p^{\sum_{k=1}^{r}{(k-1)}} = (p-1)^r p^{r(r-1)/2}.
\]
Since $(p-1)^r p^{r(r-1)/2}\geq p^r$ for $r>2$ or $p>2$, the
inequality $\abs{a_{p^r}}\leq \frac{1}{p^r}$ holds in these
cases. Besides, one has
$\abs{a_p}=\frac{1}{p-1}=\frac{p}{p-1}\cdot\frac{1}{p}$ for every
prime $p$.  The remaining term $a_{2^2} = \frac{4}{3}\frac{1}{2^2}$
introduces an additional factor of $c = \frac{4}{3}$ in the estimate
of $\abs{a_n}$, to account for the case when $2^2$ is the highest
power of $2$ that divides $n$. For $n=p_{1}^{r^{}_{1}}\cdot\ldots\cdot
p_{s}^{r^{}_{s}}$, this gives
\[
     \abs{a_n} \le \frac{c}{n} 
     \prod_{\{i\mid r_i = 1 \}}\frac{p_i}{p_i-1}.
\]
This product can be related to Euler's $\phi$-function,
\[
     \phi(n) = n\prod_{p|n}\left(1-\frac{1}{p}\right) =
     n\prod_{p|n}\left(\frac{p-1}{p}\right) \leq n\prod_{\substack{p|n\\p^2
     \nmid\ts n}}^{r}\left(\frac{p-1}{p}\right),
\]
giving $\prod_{\{i\mid r_i = 1 \}}\frac{p_i}{p_i-1} 
\leq \frac{n}{\phi(n)}$. Using
\[
      \limsup_{n\to\infty} \frac{n}{\phi(n)\log(\log n)} = e^{\gamma},
\]
where $\gamma = \lim_{n\to\infty}\left(1+ \frac{1}{2} + \cdots +
  \frac{1}{n} - \log{n}\right)$ denotes Euler's constant, compare
\cite[Thm.~328]{HardyWright}, we obtain
\[
\abs{a_n}\leq \frac{C\cdot \log(\log(n))}{n}\ 
\]
for some constant $C$, which implies
$\sum_{n=1}^{\infty}\abs{a_n}^{1+\varepsilon} <\infty$.
\end{proof}
In particular, $a\in\ell^2$, and (numerically) one finds
$\sum_{n=1}^{\infty} \lvert a_n \rvert^2 \approx 2.99635 < 3$.  
Since $\sum_{p} \frac{1}{p}$ diverges and $\abs{a^{}_p} = \frac{1}{p-1}
> \frac{1}{p}$, it is clear that the statement of
Proposition~\ref{ell1pluseps} is not true for $\varepsilon = 0$.
\begin{lemma}\label{dirichConvergence}
  Let $D_a(s)$ be the Dirichlet series of the eigenvector $a$, with $s
  = \sigma + i t$.
\begin{itemize}
\item[\rm (i)] $\diri{a}{s}$ \text{converges absolutely in} $\,\mathcal{S} =
  \{s\in\CC\mid\sigma > 1\}$.\label{absConvSet} \vspace{1.5mm}
\item[\rm (ii)] $\lim_{\sigma\to\infty}\diri{a}{\sigma + it} = 1$,
  \text{uniformly in} $t\in\RR$.
\end{itemize}
\end{lemma}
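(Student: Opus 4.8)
The plan is to prove both claims directly from the product representation in Proposition~\ref{productRepres1}, treating $D_a(s)$ as an Euler product. First I would observe that since $a$ is multiplicative (Lemma~\ref{multiplicativity}), the Dirichlet series formally factors as
\[
   \diri{a}{s} = \prod_{p}\diri{a}{s}^{(p)},
   \qquad \diri{a}{s}^{(p)} = \sum_{r=0}^{\infty}\frac{a^{}_{p^r}}{p^{rs}}
   = 1 + \sum_{r=1}^{\infty}\frac{1}{p^{rs}}\prod_{\ell=1}^{r}\frac{1}{1-p^{\ell}},
\]
and the relevant analytic work is to control this product uniformly on $\mathcal{S}$.

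For (i), the quickest route is simply to invoke Proposition~\ref{ell1pluseps}: since $a\in\ell^{1+\varepsilon}$, the estimate $\abs{a_n}\le C\log(\log n)/n$ gives $\sum_n \abs{a_n}\,n^{-\sigma}\le C\sum_n \log(\log n)\, n^{-1-\sigma} < \infty$ for every $\sigma>1$, which is absolute convergence on $\mathcal{S}$. (If one prefers an intrinsic argument, the per-prime factor satisfies $\abs{a_{p^r}}\le p^{-r}$ for all but finitely many prime powers by the computation in the proof of Proposition~\ref{ell1pluseps}, so the Euler product converges absolutely wherever $\sum_p p^{-\sigma}$ does, i.e.\ for $\sigma>1$; but citing the earlier proposition is cleaner.)

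For (ii), I would isolate the constant term of each Euler factor and bound the tail. Write $\diri{a}{s} = 1 + R(s)$ where $R(s) = \sum_{n\ge 2} a_n n^{-s}$. Then for $\sigma = \mathrm{Re}(s)$,
\[
   \abs{R(\sigma+it)} \;\le\; \sum_{n=2}^{\infty}\abs{a_n}\, n^{-\sigma}
   \;\le\; 2^{-\sigma}\sum_{n=2}^{\infty}\abs{a_n}\, \Bigl(\tfrac{n}{2}\Bigr)^{\!-\sigma}
   \;\le\; 2^{-\sigma}\sum_{n=2}^{\infty}\abs{a_n},
\]
valid for $\sigma\ge 1$, and the last sum is finite by Proposition~\ref{ell1pluseps} (with $\varepsilon=1$, say, or directly since $a\in\ell^2\subset$ the tail being summable from the $\log\log$ bound). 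The bound $2^{-\sigma}$ does not depend on $t$ and tends to $0$ as $\sigma\to\infty$, so $\diri{a}{\sigma+it}\to 1$ uniformly in $t\in\RR$.

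I expect no real obstacle here; the only point requiring a little care is making sure the dominating bound in (ii) is genuinely $t$-free, which the display above arranges by passing to absolute values before estimating. One could equally well phrase (ii) via the Euler product, noting $\log\diri{a}{s}^{(p)} = O(p^{-\sigma})$ uniformly in $t$ and $\sum_p p^{-\sigma}\to 0$ as $\sigma\to\infty$, but the triangle-inequality argument on $R(s)$ is shorter and self-contained given the earlier tail estimate.
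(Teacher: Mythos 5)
Part (i) is correct but more roundabout than the paper's argument: rather than invoking Proposition~\ref{ell1pluseps}, the paper simply uses $\abs{a_n}\le 1$ for all $n$ (noted right after Proposition~\ref{productRepres1}), which gives $\sum_m \abs{a_m}\, m^{-\sigma}\le \zeta(\sigma)$ directly. Either route works for (i).

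Part (ii), however, has a genuine gap. Your chain of inequalities ends with $\abs{R(\sigma+it)}\le 2^{-\sigma}\sum_{n\ge 2}\abs{a_n}$, and you justify finiteness of the last sum by ``Proposition~\ref{ell1pluseps} with $\varepsilon=1$.'' But $\varepsilon=1$ gives $a\in\ell^2$, i.e.\ $\sum_n\abs{a_n}^2<\infty$, not $\sum_n\abs{a_n}<\infty$; and in fact $a\notin\ell^1$, as the paper points out explicitly just before Lemma~\ref{dirichConvergence}: $\abs{a_p}=\frac{1}{p-1}>\frac{1}{p}$ and $\sum_p \frac{1}{p}$ diverges. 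So after pulling out $2^{-\sigma}$ there is nothing left to make the remaining sum converge, and the displayed bound is $2^{-\sigma}\cdot\infty$, hence vacuous. The repair is to keep a decaying power of $n$ in the tail: the cleanest version is again $\abs{a_n}\le 1$, giving $\abs{\diri{a}{s}-1}\le\sum_{n\ge 2}n^{-\sigma}=\zeta(\sigma)-1\to 0$ as $\sigma\to\infty$, independently of $t$ — which is exactly the paper's argument. (Alternatively, factor out only $2^{-(\sigma-2)}$ to retain $n^{-2}$ in the sum, and use $\abs{a_n}\le 1$ or $a\in\ell^2$ with Cauchy--Schwarz to bound $\sum_{n\ge2}\abs{a_n}n^{-2}$; but this is more work for the same conclusion.)
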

\begin{proof} 
  We employ the methods of \cite[Chapter 11]{apostol}.  The first
  claim follows from the absolute convergence of $\zeta(\sigma)$ for
  $\sigma > 1$, because $\sum_{m}\abs{\frac{a_m}{m^s}}\leq
  \sum_{m}\frac{\abs{a_m}}{m^{\sigma}}\leq \zeta(\sigma)$.  

For (ii), observe that $a_1=1$, wherefore one has
\[
   \abs{\diri{a}{s}-1}\leq \sum_{m=2}^{\infty}\frac{1}{m^{\sigma}}
   = \zeta(\sigma)- 1,
\]
which tends to $0$ (independently of $t$) as $\sigma\to\infty$. 
\qedhere
\end{proof} 
Using the fixed point relations $\fix(a) = a$ and $\orb(a) = a$, the
first equation of Lemma~\ref{dirichFix} becomes a `recursion' that can be
solved for an explicit expression in terms of the Riemann zeta
function.
\begin{theorem}\label{dirich_a}
The Dirichlet series of the fixed point $a$ satisfies
\[
    \diri{a}{s} = \prod_{\ell=1}^{\infty} \frac{1}{\zeta(s+\ell)} =
    \prod_{p}\prod_{\ell\geq 1}\left(1-\frac{1}{p^{s+\ell}}\right),
\]
where the infinite product converges on the set\/ $\mathcal{S}$ from
Lemma~\ref{dirichConvergence}.
\end{theorem}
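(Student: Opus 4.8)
The plan is to turn the fixed point relations for $a$ into a functional equation for its Dirichlet series and then solve that equation by iteration. Since $\fix(a) = a$ and $\orb(a) = a$, substituting into the second identity of Lemma~\ref{dirichFix} gives
\[
   \diri{a}{s} \;=\; \diri{\orb(a)}{s} \;=\; \frac{1}{\zeta(s+1)}\,\diri{a}{s+1},
\]
and this is an identity of absolutely convergent series on $\mathcal{S} = \{\sigma > 1\}$, because there $\mathrm{Re}(s+1) > 2$, so Lemma~\ref{dirichConvergence}(i) applies both to $\diri{a}{s}$ and to $\diri{a}{s+1}$. Replacing $s$ by $s+1, s+2, \dots$ and iterating $n$ times, one obtains for every $n\in\NN$ and every $s\in\mathcal{S}$ the identity
\[
   \diri{a}{s} \;=\; \Bigl(\,\prod_{\ell=1}^{n} \frac{1}{\zeta(s+\ell)}\,\Bigr)\,\diri{a}{s+n}.
\]

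The next step is to let $n\to\infty$, noting that the left-hand side is independent of $n$. For this I need two facts. First, $\diri{a}{s+n}\to 1$ by Lemma~\ref{dirichConvergence}(ii), uniformly in $\mathrm{Im}\,s$. Second, the infinite product $\prod_{\ell\ge 1}\zeta(s+\ell)^{-1}$ converges: from
\[
   \bigl\lvert \zeta(s+\ell) - 1 \bigr\rvert \;\le\; \zeta(\sigma+\ell) - 1 \;=\; \sum_{m\ge 2} m^{-(\sigma+\ell)} \;=\; O\bigl(2^{-\ell}\bigr),
\]
uniformly on every half-plane $\sigma \ge 1+\delta$, one gets $\sum_{\ell\ge 1}\lvert\zeta(s+\ell) - 1\rvert < \infty$; and $\zeta(s+\ell)\neq 0$ since $\mathrm{Re}(s+\ell) > 1$. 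Hence the partial products converge absolutely and locally uniformly on $\mathcal{S}$ to a nonzero limit, and passing to the limit in the displayed identity yields $\diri{a}{s} = \prod_{\ell=1}^{\infty}\zeta(s+\ell)^{-1}$ on $\mathcal{S}$.

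To obtain the second expression, I would insert the Euler product of $\zeta$ into each factor: for $\mathrm{Re}(s+\ell) > 1$ one has $\zeta(s+\ell)^{-1} = \prod_{p}(1 - p^{-(s+\ell)})$. The resulting double product over primes $p$ and exponents $\ell\ge 1$ is absolutely convergent, since $\sum_{p}\sum_{\ell\ge 1}\lvert p^{-(s+\ell)}\rvert \le \sum_{\ell\ge 1}(\zeta(\sigma+\ell) - 1) < \infty$, so the two products may be interchanged, giving $\prod_{p}\prod_{\ell\ge 1}(1 - p^{-(s+\ell)})$.

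The main obstacle I anticipate is the passage to the limit, i.e.\ the justification that $\lim_{n\to\infty}\bigl(\prod_{\ell=1}^{n}\zeta(s+\ell)^{-1}\bigr)\diri{a}{s+n}$ splits as the product of the two separate limits. This is precisely where the uniform statement $\diri{a}{\sigma+n+it}\to 1$ of Lemma~\ref{dirichConvergence}(ii) is needed, together with the absolute convergence of the $\zeta$-product (which keeps the partial products bounded and forces a nonzero limit). Everything else reduces to routine estimates involving the zeta function.
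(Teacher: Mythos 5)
Your proof is correct and follows essentially the same path as the paper's: iterate the functional equation from Lemma~\ref{dirichFix} to get $\diri{a}{s} = \bigl(\prod_{\ell=1}^{n}\zeta(s+\ell)^{-1}\bigr)\diri{a}{s+n}$, send $n\to\infty$ using that the numerator tends to $1$ and that the product of $\zeta(s+\ell)^{-1}$ converges, then expand each factor as an Euler product and swap the order by absolute convergence. The only cosmetic differences are that you invoke $\orb(a)=a$ and Lemma~\ref{dirichConvergence}(ii) where the paper writes the recursion via $\fix(a)=a$ and re-derives the estimate $\diri{a}{s+m}\le a_1+2^{-m}\diri{a}{s}$ inline.
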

\begin{proof}
  Using Lemma \ref{dirichFix}, one inductively obtains $ \diri{a}{s}
  = \frac{\diri{a}{s+m}}{\prod_{\ell=1}^{m}\zeta(s+\ell)} $ for
  arbitrary $m\in\NN$.  For $m\to\infty$, the numerator converges to
  $1$, since
\[
\diri{a}{s+m} = a_1 + \sum_{n\geq 2}\frac{a_n}{n^s}\frac{1}{n^m}\leq
a_1 + \frac{1}{2^m}\diri{a}{s}.
\]
Due to the asymptotic behaviour $\zeta(\sigma + i t) = 1+
\mathcal{O}(1/2^{\sigma})$ for $\sigma\to\infty$, the product in the
denominator of the resulting expression $D_a(s) =
\prod_{\ell=1}^{\infty} \frac{1}{\zeta(s+\ell)}$ converges as well,
giving the first part of the claim.  The second equality follows from
the Euler product representation $ \zeta(s+\ell)^{-1} = \prod_{p} (1 -
p^{-(s+\ell)}) $ for $\sigma > 1$, in which the order of taking
products over $p$ and $\ell$ may be exchanged due to absolute
convergence.
\end{proof}
\begin{rem}
  Since $a^{}_1\not= 0$, $a$ admits a Dirichlet inverse. Its terms can
  be calculated via to the general recursion formula, compare
  \cite[Thm.~2.8]{apostol}.  The Dirichlet inverse satisfies
  $D^{}_{f^{-1}} = 1/D^{}_{f}$, wherefore $b := a^{-1}$ has the
  Dirichlet series $D^{}_{b} = \prod_{\ell \geq 1} \zeta(s + \ell).$
  Determining the coefficients of this series leads to an analogue of
  Proposition~\ref{productRepres1}; the $n$-th entry of $b$ is
\[
       b^{}_n = \prod_{j=1}^{s} \prod_{k=1}^{r_j} \frac{p_j^{k-1}}{p_j^{k}-1}
       =\prod_{j=1}^{s} \frac{p_j^{\frac{1}{2} r_j(r_j-1)}}{\prod_{\ell =
        1}^{r_j} (p_j^{\ell}-1)},
\]
with $n = p_1^{r^{}_1}\cdot\ldots\cdot p_s^{r^{}_s}$ as before; see
\cite{NN} for details. The first few terms of $a$ and $b$ read
\[
  a = \Bigl( 1,-1,-\frac{1}{2},\frac{1}{3},-\frac{1}{4},
     \frac{1}{2},-\frac{1}{6},-\frac{1}{21}, \ldots \Bigr)
  \quad \text{and} \quad  b = \Bigl(
  1,1,\frac{1}{2},\frac{2}{3},\frac{1}{4},\frac{1}{2},
  \frac{1}{6},\frac{8}{21}, \ldots \Bigr).
\]
Although it appears that $a$ and $b$ share the same denominator
sequence, this is not true when the terms are represented as reduced
fractions, because cancellation can occur for the $b_n$.  The first
instance of this phenomenon is $b_{12} = \frac{1}{3}$, whereas
$a_{12}=-\frac{1}{6}$.  
\end{rem}

Returning to the task of identifying all eigenvalues and their
corresponding eigenspaces, we consider the equation $x = \lambda
\cdot\fix(x)$ for $\lambda \not= 1$ or, equivalently,
\[
   (\lambda - n) \, x^{}_n = 
   \lambda \sum_{n > d|n}d\, x^{}_d  
\]
for all $n\in\NN$.  Since $\fix(x)^{}_1 = x^{}_1$ by definition, the
first term of such an eigenvector has to be $0$.  In general, if
$x^{}_d = 0$ for all $d|n$ with $d<n$, the $n$-th term $x^{}_n$ will
vanish as well, unless $\lambda = n$.  Thus, the first non-zero term
is $x^{}_m$ where $m = \lambda$, and all eigenvalues are thus natural
numbers.  For fixed $1 < m\in\NN$, the recursion analogous to the case
$m=1$ given in \eqref{recursion} is
\begin{equation}
       x^{}_n = \frac{1}{m  - n}\sum_{\substack{n>d|n}}d\, x^{}_d\ 
       \qquad \text{(for}\quad n\not= m \text{)},
\end{equation}
which shows that $x^{}_n \not= 0$ if and only if $n = km$ for some
$k\geq 1$.  The recursion for the non-vanishing terms
simplifies to
\[
     x_{km}^{} = \frac{1}{m(1-k)}\sum_{k>d|k}^{} d\ts m\, x^{}_{dm}
           = \frac{1}{1-k}\sum_{k>d|k}^{} d\, x^{}_{dm}
\]
for $k>1$,where the only free parameter is now the first non-zero term
$x_m$, indicating a one-dimensional eigenspace.  Choosing $x_m =
a^{}_1 = 1$, the resulting eigenvector $a^{(m)}$ with eigenvalue $m$
can be expressed in terms of $a = a^{(1)}$:
\begin{equation}
    a^{(m)} (n) = \begin{cases}
    a\left(\frac{n}{m}\right), & \text{if } m|n ; \\
           0,      & \text{otherwise}.
\end{cases}
\end{equation}
In particular, $a^{(m)}\in\ell^{1+\varepsilon}$ for all $\varepsilon >
0$ and all $m\geq 1$, where $a^{(m)}$ and $a$ clearly have the same
norm.  Being eigenvectors for different eigenvalues, the $a^{(m)}$ are
linearly independent. The next proposition states that they even
generate the space of arithmetic functions in the sense of formal
linear combinations.
\begin{prop}
  The eigenvectors of\/ $\fix$ form a basis of $\mathcal{A}$, so
  that $\left<a^{(m)}\mid m\in\NN\right>_{\CC} = \mathcal{A}$,
  and the representation of $f\in\mathcal{A}$ as a formal linear
  combination of the $a^{(m)}$ is unique.
\end{prop}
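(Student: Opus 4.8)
The plan is to exploit the explicit description from the preceding discussion, namely $a^{(m)}(n) = a(n/m)$ for $m \mid n$ and $a^{(m)}(n) = 0$ otherwise, which in particular gives $a^{(m)}(n) = 0$ whenever $n < m$ and $a^{(m)}(m) = a(1) = 1$. First I would observe that this triangular structure makes a formal linear combination $\sum_{m \in \NN} \lambda_m\, a^{(m)}$ unambiguous: for each fixed $n$, only the finitely many indices $m$ with $m \mid n$ contribute, so the $n$-th component equals the finite sum $\sum_{m \mid n} \lambda_m\, a(n/m)$, and two formal linear combinations coincide precisely when they agree in every component.

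Next I would reformulate the spanning-and-uniqueness assertion as a single identity in the Dirichlet algebra $(\mathcal{A},+,\ast)$. Given $f \in \mathcal{A}$ and a coefficient family $(\lambda_m)_{m\in\NN}$, collect the $\lambda_m$ into an arithmetic function $\lambda$ with $\lambda(m) = \lambda_m$. Evaluating the desired identity $f = \sum_m \lambda_m\, a^{(m)}$ at an arbitrary $n$ then reads
\[
   f(n) \;=\; \sum_{d \mid n} \lambda(d)\, a\!\left(\tfrac{n}{d}\right) \;=\; \bigl(\lambda \ast a\bigr)(n) ,
\]
so $f$ is represented by the $a^{(m)}$ with coefficient family $\lambda$ if and only if $\lambda \ast a = f$.

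Finally, since $a_1 = 1 \neq 0$, the eigenvector $a$ is invertible with respect to Dirichlet convolution, with inverse $b = a^{-1}$ as recorded in the Remark above. Hence the equation $\lambda \ast a = f$ has the unique solution $\lambda = f \ast b$, that is, $\lambda_m = (f \ast b)(m) = \sum_{d \mid m} f(d)\, b(m/d)$. The existence of this $\lambda$ yields $\langle a^{(m)} \mid m \in \NN \rangle_{\CC} = \mathcal{A}$, and its uniqueness gives uniqueness of the representation (equivalently, it re-confirms the linear independence of the $a^{(m)}$ already noted). I do not expect a genuine obstacle here beyond the bookkeeping; the one point deserving a sentence of care is the well-definedness of infinite formal linear combinations, guaranteed by the local finiteness from the first step, after which the entire statement collapses to the invertibility of $a$ in $(\mathcal{A},+,\ast)$.
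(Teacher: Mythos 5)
Your proof is correct and follows essentially the same route as the paper: both reduce the statement to the observation that $\bigl(\sum_m \lambda_m a^{(m)}\bigr)(n) = (\lambda \ast a)(n)$ and then invoke the Dirichlet invertibility of $a$ (since $a_1 = 1 \neq 0$) to solve $\lambda \ast a = f$ uniquely by $\lambda = f \ast a^{-1}$. Your explicit remark on the local finiteness of the formal sums is a small clarifying addition, but the substance of the argument is the same.
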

\begin{proof}
  Let $f\in\mathcal{A}$, $a = a^{(1)}$ and $b := (a^{})^{-1}$, and
  set $\alpha = f \ast b$.  Then, $\alpha * a^{} = (f*b)*a = f$.  On
  the other hand, one has
\[
        \bigl(\alpha* a \bigr) (k) =  \sum_{d|k} \alpha^{}_{d} \, a^{(d)} (k)
        =  \sum_{m\geq 1}  \alpha^{}_{m}\, a^{(m)}(k)
\]        
  for all  $k\in\NN$, so $f\in \left<a^{(m)}\mid m\in\NN\right>_{\CC} $.
 Uniqueness of the linear combination follows  from the uniqueness 
 of the Dirichlet inverse.
\end{proof}
The spectrum of an operator $T\!: \, \mathcal{A}\to\mathcal{A}$ is 
defined by
\[
     \mathrm{spec}(T) := \{\lambda\in\CC\mid T-\lambda\,\id 
     \text{ is not invertible}\, \} \ts .
\]
Note that $\fix$ and $\orb$ could be investigated as operators on
$\ell^2$, restricting their domains to appropriate invariant
subspaces.  But since we need a more general setting later when
considering their behaviour under iteration, we work with the spectrum
as a purely algebraic notion.
\begin{theorem}
The spectra of\/ $\fix$ and $\orb$ are
\[
   \mathrm{spec}(\fix) = \NN  \quad and \quad 
    \mathrm{spec}(\orb) =
          \left\{\frac{1}{N} \mid N\in\NN \right\}\ts .
\]
\end{theorem}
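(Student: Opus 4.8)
The plan is to reduce both computations to the structure of $\fix$ alone, exploiting that $\fix$ and $\orb$ are mutually inverse linear operators on $\mathcal{A}$: the two formulas in \eqref{intTrans} are precisely the coordinate expressions of $\fix$ and $\orb$, and classical M\"obius inversion gives $\orb \circ \fix = \fix \circ \orb = \id$ on all of $\mathcal{A}$. In particular $\fix$ is invertible, so that $0 \notin \mathrm{spec}(\fix)$ and $0 \notin \mathrm{spec}(\orb)$, and it remains to locate the non-zero spectral points.

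For $\mathrm{spec}(\fix)$ I would argue by two inclusions. The inclusion $\NN \subseteq \mathrm{spec}(\fix)$ is immediate: for each $m \in \NN$ the vector $a^{(m)}$ constructed above lies in $\ker(\fix - m\,\id) \setminus \{0\}$, so $\fix - m\,\id$ is not injective. For the reverse inclusion, fix $\lambda \in \CC \setminus \NN$ and observe that
\[
   \bigl( (\fix - \lambda\,\id)(f) \bigr)_n = (n - \lambda)\, f_n + \sum_{n > d \mid n} d\, f_d .
\]
Given $g \in \mathcal{A}$, the equation $(\fix - \lambda\,\id)(f) = g$ then admits the unique solution determined recursively by $f_1 = g_1 / (1 - \lambda)$ and $f_n = (n - \lambda)^{-1}\bigl( g_n - \sum_{n > d \mid n} d\, f_d \bigr)$ for $n > 1$, all denominators $n - \lambda$ being non-zero by the choice of $\lambda$. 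Since each $f_n$ is thereby a finite $\CC$-linear combination of $g_1, \dots, g_n$, the map $\fix - \lambda\,\id$ is a bijection of $\mathcal{A}$ with linear inverse, hence $\lambda \notin \mathrm{spec}(\fix)$. Together with $0 \notin \mathrm{spec}(\fix)$ this yields $\mathrm{spec}(\fix) = \NN$.

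To pass to $\orb = \fix^{-1}$ I would use a purely algebraic manipulation. For $\mu \neq 0$ one has, using $\orb \circ \fix = \id$, the operator identity
\[
   \orb - \mu\,\id = -\,\mu\,\orb \circ \bigl( \fix - \tfrac{1}{\mu}\,\id \bigr) ,
\]
and since $\orb$ and the non-zero scalar $-\mu$ are invertible, $\orb - \mu\,\id$ is invertible precisely when $\fix - \frac{1}{\mu}\,\id$ is, i.e.\ precisely when $\frac{1}{\mu} \notin \NN$. Combined with $0 \notin \mathrm{spec}(\orb)$, this gives $\mathrm{spec}(\orb) = \{ \frac{1}{N} \mid N \in \NN \}$.

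The main thing to be careful about is that ``invertible'' is understood in the algebraic sense, so for $\lambda \notin \NN$ one must verify not just that $\fix - \lambda\,\id$ is bijective but that the recursively constructed inverse is itself a linear self-map of $\mathcal{A}$; this is exactly what the triangular shape of the recursion provides, and it is the only place where genuine (if routine) checking is needed. A secondary point is to confirm that $\orb \circ \fix = \id$ holds on the whole space of complex arithmetic functions rather than only on non-negative integer sequences, which is clear because Dirichlet convolution and pointwise scaling by $N$ and $1/N$ are defined throughout $\mathcal{A}$.
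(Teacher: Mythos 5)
Your proof is correct. For $\mathrm{spec}(\fix)$ it follows the paper's argument almost verbatim: the same triangular recursion $f_n = (n - \lambda)^{-1}\bigl(g_n - \sum_{n>d\mid n} d\,f_d\bigr)$ establishes invertibility of $\fix - \lambda\,\id$ for $\lambda \notin \NN$, and the eigenvectors $a^{(m)}$ give the other inclusion. For $\mathrm{spec}(\orb)$ you take a slightly different route: the paper merely invokes the eigenvalue correspondence $\fix(a^{(N)}) = N a^{(N)} \Leftrightarrow \orb(a^{(N)}) = \tfrac{1}{N}a^{(N)}$, which strictly speaking only locates the eigenvalues $\tfrac{1}{N}$, whereas your operator identity
\[
   \orb - \mu\,\id \;=\; -\mu\,\orb\circ\bigl(\fix - \tfrac{1}{\mu}\,\id\bigr)
\]
transfers invertibility directly in both directions and thus settles the full spectrum without an extra argument for the complementary $\mu$. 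This is a cleaner, somewhat more complete rendering of the same underlying idea — both hinge on $\fix\circ\orb = \orb\circ\fix = \id$ — and it amounts to proving the spectral-mapping relation $\mathrm{spec}(T^{-1}) = \{1/\lambda : \lambda \in \mathrm{spec}(T)\}$ by hand in this purely algebraic setting. One small redundancy: your separate observation that $0\notin\mathrm{spec}(\fix)$ is already subsumed by the recursion argument, since $0 \in \CC\setminus\NN$.
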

\begin{proof}
  Obviously, $\fix(f) = \lambda f$ implies $f=0$ if and only if
  $\lambda\in\CC\setminus\NN$, so $\fix - \lambda\,\id$ is
  injective for these $\lambda$.  Surjectivity can be seen by
  constructing a preimage $f\in\mathcal{A}$ of an arbitrary
  $g\in\mathcal{A}$ under $\fix-\lambda\, \id$.  To this end, let
  $f_1 = \frac{g^{}_1}{1-\lambda}$ and, having already defined
  $f_1,\ldots,f_{n-1}$ for some $n>1$, set
\[
          f^{}_n = \frac{1}{n-\lambda} \Bigl(g^{}_n - \sum_{n>d|n}
          d\,  f^{}_d\Bigr).
\]
When $\lambda\notin\NN$, the resulting arithmetic function $f$ is a
term-wise well-defined element of $\mathcal{A}$ with $\fix(f) -
\lambda f = g$.  In summary, $\fix-\lambda\,\id$ is invertible if and
only if $\lambda\in\CC\setminus\NN$.  The spectrum of $\orb$ can be
derived from that of $\fix$ by the equivalence
\[
      \fix(a^{(N)}) = N a^{(N)} \, \Longleftrightarrow \;
      \orb(a^{(N)}) = \frac{1}{N} \, a^{(N)},
\]
which follows from  $\fix\circ\orb = \orb\circ\fix = \id$.
\end{proof}

\section{Iterations of the operator $\fix$}

Puri and Ward \cite{PuriWard} raised the question of the nature of the
orbits under $\fix$. In particular, the asymptotic properties of such
orbits are sought.  In our terminology, this is to ask how
elements of $\mathcal{A}$ behave under iteration of the operator
$\fix$.  Starting from $\delta_{1} = (1,0,0,\ldots)$, the first
iterates are sequences \texttt{A000007}, \texttt{A000012},
\texttt{A000203}, \texttt{A001001} and \texttt{A038991} --
\texttt{A038999} of the OEIS, see \cite{PuriWard, OEIS}.  Here, we
start from the sequences $\delta_m$, $m\geq 1$, defined via $\delta_m
(n) = \delta_{m,n}$ for all $n$, and give term-wise exact
representations of $\fix^n(\delta_m)$, from which the corresponding
quantities of general starting sequences can later be constructed.

Let $y^{(0)} = \delta_1$ and $y^{(n)} := \fix(y^{(n-1)})$ for $n\geq
1$, so that
\[
     y^{(n)}(m) = \bigl(\fix(y^{(n-1)})\bigr)_m =
     \bigl(\fix^n(y^{(0)})\bigr)_m
\]
holds for all $m\ge 1$.
\begin{prop}\label{y(primePower)}
      Let $p$ be a prime and $r\in\NN$. Then
\[
     y^{(n)}(p^r) = \gabi{n+r-1}{r}{p} = 
     \prod_{i=0}^{r-1}\frac{1-p^{n+r-1-i}}{1-p^{i+1}},
\]
      where $\sga{n}{r}{q}$ denotes the Gaussian or $q$-binomial 
      coefficient.
\end{prop}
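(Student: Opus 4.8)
The plan is to reduce the statement first to a single prime and then to a classical $q$-binomial identity. Since $\delta_1$ is multiplicative and $\fix$ maps multiplicative functions to multiplicative functions (as noted just before Lemma~\ref{multiplicativity}), an immediate induction shows that every iterate $y^{(n)}$ is multiplicative, so $y^{(n)}$ is completely determined by its values at prime powers and it suffices to establish the formula for one fixed prime $p$. Writing $u^{(n)}_r := y^{(n)}(p^r)$ and using that the divisors of $p^r$ are exactly the $p^{j}$ with $0\leq j\leq r$, so that $\fix(f)(p^r)=\sum_{j=0}^{r}p^{j}f(p^{j})$, the iteration $y^{(n)}=\fix(y^{(n-1)})$ turns into the one-parameter recursion
\begin{equation}\label{eq:ppr}
   u^{(n)}_r=\sum_{j=0}^{r}p^{j}\,u^{(n-1)}_j\qquad(n\geq 1),
\end{equation}
with initial data $u^{(0)}_0=1$ and $u^{(0)}_r=0$ for $r\geq 1$.

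I would then prove $u^{(n)}_r=\gabi{n+r-1}{r}{p}$ by induction on $n$. The cases $n=0$ (where the claimed value is $1$ for $r=0$ and $0$ for $r\geq 1$, as the product expression shows) and $n=1$ (where \eqref{eq:ppr} gives $u^{(1)}_r=u^{(0)}_0=1=\gabi{r}{r}{p}$) are immediate. For the induction step, inserting $u^{(n-1)}_j=\gabi{n+j-2}{j}{p}$ into \eqref{eq:ppr} reduces the claim to the $q$-binomial identity
\begin{equation}\label{eq:qstick}
   \sum_{j=0}^{r}q^{j}\,\gabi{m+j}{j}{q}=\gabi{m+r+1}{r}{q},
\end{equation}
applied with $q=p$ and $m=n-2$. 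Identity \eqref{eq:qstick} is a $q$-analogue of the hockey-stick identity, which I would prove by a short induction on $r$: the inductive hypothesis rewrites $\sum_{j=0}^{r-1}q^{j}\gabi{m+j}{j}{q}$ as $\gabi{m+r}{r-1}{q}$, and then the $q$-Pascal rule $\gabi{N}{k}{q}=\gabi{N-1}{k-1}{q}+q^{k}\gabi{N-1}{k}{q}$ with $N=m+r+1$ and $k=r$ gives $\gabi{m+r}{r-1}{q}+q^{r}\gabi{m+r}{r}{q}=\gabi{m+r+1}{r}{q}$.

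The second equality in the proposition is nothing but the defining product formula for the Gaussian binomial coefficient, $\gabi{N}{r}{q}=\prod_{i=0}^{r-1}\frac{1-q^{N-i}}{1-q^{i+1}}$, specialised to $N=n+r-1$, so no further work is needed there. An essentially equivalent route proceeds via Dirichlet series: iterating Lemma~\ref{dirichFix} from $\diri{\delta_1}{s}=1$ gives $\diri{y^{(n)}}{s}=\prod_{\ell=0}^{n-1}\zetfun{s-\ell}$, whose Euler factor at $p$ is $\prod_{\ell=0}^{n-1}(1-p^{\ell}p^{-s})^{-1}$; since $y^{(n)}$ is multiplicative, $y^{(n)}(p^r)$ is the coefficient of $(p^{-s})^{r}$ in this local factor, which the standard $q$-binomial series $\prod_{\ell=0}^{n-1}(1-q^{\ell}x)^{-1}=\sum_{r\geq 0}\gabi{n+r-1}{r}{q}\,x^{r}$ identifies as $\gabi{n+r-1}{r}{p}$. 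I do not expect a genuine obstacle in either version: the one ingredient beyond routine bookkeeping is the $q$-hockey-stick identity \eqref{eq:qstick} (equivalently, the $q$-binomial series), both of which are classical; the only point requiring care is the consistency of the degenerate conventions $\gabi{N}{0}{q}=1$ and $\gabi{N}{r}{q}=0$ for $0\leq N<r$ with the product formula used in the statement.
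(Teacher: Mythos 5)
Your primary argument matches the paper's: both proofs induct on $n$, use the one-parameter recursion $y^{(n)}(p^r)=\sum_{j=0}^{r}p^{j}\,y^{(n-1)}(p^{j})$ at a fixed prime, and reduce the inductive step to the $q$-Pascal rule. The only organizational difference is that you isolate the $q$-hockey-stick identity \eqref{eq:qstick} and prove it by a nested induction on $r$, whereas the paper applies $q$-Pascal to each summand and telescopes the sum directly; these are the same computation packaged differently. Your alternative route via iterating Lemma~\ref{dirichFix} to get $\diri{y^{(n)}}{s}=\prod_{\ell=0}^{n-1}\zetfun{s-\ell}$ and reading off the $p$-local factor with the $q$-binomial series is also correct, and it closely parallels the paper's own Remark following Corollary~\ref{y(m)}, where the same iterated zeta-product identifies $y^{(n)}(m)$ with the sublattice count $h_n(m)$.
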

\begin{proof}
  The claim can be verified by induction in $n$. By definition of
  $\fix$, we have
\begin{align}\label{E:sumBin}
  y^{(n+1)}(p^r) &= \fix(y^{(n)})(p^r) =
  \sum_{d|p^r}d\cdot y^{(n)}(d)= \sum_{k=0}^{r}p^k \cdot y^{(n)}(p^k) 
   \notag \\
  &= 1+ p\gabi{n}{1}{p}+\sum_{k=2}^{r}p^k \cdot y^{(n)}(p^k)  \ts .
\end{align}
Applying \textit{Pascal's identity} for Gaussian binomials,
\[
   \gabi{m}{\ell}{q} = q^r \gabi{m-1}{\ell}{q} + 
   \gabi{m-1}{\ell-1}{q},
\]
to the $k$-th summand in \eqref{E:sumBin} yields
\[
    p^k\gabi{n+k-1}{k}{p}=\gabi{n+k}{k}{p}-
    \gabi{n+k-1}{k-1}{p}.
\]
Plugging this and the relation
$\sga{n}{1}{p}=\sum_{k=0}^{n-1} p^k$ into \eqref{E:sumBin},
we obtain
\[
    y^{(n+1)}(p^r) = \gabi{n+1}{1}{p} + 
    \sum_{k=2}^{r}\gabi{n+k}{k}{p}-\gabi{n+k-1}{k-1}{p}  
     =\gabi{n+r}{r}{p}
\]
which completes the proof.
\end{proof}
\begin{corollary}\label{y(m)}
       For $1\not= m\in\NN$ with prime decomposition $m =
       p_1^{r^{}_1}p_2^{r^{}_2}\cdot\ldots\cdot p_k^{r^{}_k}$, one has
\[
     y^{(n)}(m)=y^{(n)}(p_1^{r^{}_1})\, 
     y^{(n)}{(p_2^{r^{}_2})}\cdot\ldots\cdot y^{(n)}(p_k^{r^{}_k}) 
     =\prod_{j=1}^{k} \prod_{i=0}^{r_j -1}
       \frac{1-p_j^{n+r_j-1-i}}{1-p_j^{i+1}}.
\]
\end{corollary}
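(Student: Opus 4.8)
The plan is to reduce the statement to Proposition~\ref{y(primePower)} by showing that each iterate $y^{(n)}$ is a multiplicative arithmetic function. First I would note that $y^{(0)} = \delta_1$ is (completely) multiplicative, since $\delta_1(nm) = \delta_1(n)\ts\delta_1(m)$ holds for all $n,m\in\NN$. Next I would verify that $\fix$ maps multiplicative functions to multiplicative functions: using the convolution representation $\fix(f) = N\cdot\bigl(\frac{1}{N}\ast f\bigr)$ from \eqref{convRepres}, this is immediate because $N\!:n\mapsto n$ and $\frac{1}{N}\!:n\mapsto 1/n$ are completely multiplicative, the Dirichlet convolution of two multiplicative functions is again multiplicative, and the pointwise product of two multiplicative functions is multiplicative (all standard, compare \cite[Ch.~2]{apostol}). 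A straightforward induction on $n$ then shows that $y^{(n)} = \fix^n(\delta_1)$ is multiplicative for every $n\geq 0$.

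Granting multiplicativity, the remainder is routine. For $1\neq m\in\NN$ with prime decomposition $m = p_1^{r_1}\cdot\ldots\cdot p_k^{r_k}$, the prime powers $p_1^{r_1},\ldots,p_k^{r_k}$ are pairwise coprime, so iterating the multiplicativity relation gives $y^{(n)}(m) = \prod_{j=1}^{k} y^{(n)}(p_j^{r_j})$. Substituting the closed form $y^{(n)}(p_j^{r_j}) = \sga{n+r_j-1}{r_j}{p_j} = \prod_{i=0}^{r_j-1}\frac{1-p_j^{n+r_j-1-i}}{1-p_j^{i+1}}$ from Proposition~\ref{y(primePower)} into each factor produces exactly the asserted double product, which completes the argument.

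I do not expect a genuine obstacle here; the one point deserving care is the justification that $\fix$ preserves multiplicativity, which is used in passing after Lemma~\ref{multiplicativity} but is worth making explicit via \eqref{convRepres}. Should one wish to avoid invoking the standard closure properties of multiplicative functions, an alternative is a direct double induction (on $n$ and on the index $m$) from the divisor-sum definition of $\fix$, along the lines of the proof of Lemma~\ref{multiplicativity}; this also works but is considerably more cumbersome, so the convolution route is the one I would follow.
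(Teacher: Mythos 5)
Your proof is correct and follows essentially the same route as the paper: deduce that $y^{(n)}$ is multiplicative (since $\delta_1$ is and $\fix$ preserves multiplicativity) and then apply Proposition~\ref{y(primePower)} factorwise. The only difference is that the paper dismisses the preservation of multiplicativity as ``easy to check,'' while you justify it explicitly via the convolution representation~\eqref{convRepres} and the standard closure properties of multiplicative functions, which is a welcome addition of detail but not a different argument.
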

\begin{proof}
  It is easy to check that $\fix$, and hence all iterates $\fix^n$
  with $n\geq 1,$ preserve multiplicativity.  Thus, in view of
  Proposition~\ref{y(primePower)}, the claim follows from $\delta_1$
  being multiplicative.
\end{proof}
\begin{rem}
  A different approach to calculating $y^{(n)}$ exploits the fact that
  its Dirichlet series $D^{}_{y^{(n)}}(s)$ is known from the context
  of counting sublattices of $\ZZ^n$.  Let
\[
    h^{}_n(m) = \big\lvert\{\varLambda \mid \varLambda
    \text{ is a sublattice of $\ZZ^n$ of index $m$}\}\big\rvert
\]
and $D_{h_n}(s) = \sum_{m\geq 1} \frac{h_n(m)}{m^s}$.
Proposition A.1 of \cite{coincApp} states that
\[
   h_{n}(m) = \sum_{d_1\cdot\ldots\cdot d_n = m} 
   d_1^{\ts 0}\cdot d_2^{\ts 1}\cdot\ldots\cdot 
   d_n^{\ts n-1}    \quad \text{and}\quad 
   D^{}_{h_n}(s) = \zeta(s)\cdot D^{}_{h_{n-1}}(s-1),
\]
where the first sum runs over all $n$-tuples $(d_1,\ldots, d_n)$ of
positive integers with $d_1\cdot\ldots\cdot d_n = m$.  In particular,
$D^{}_{h_1}(s) = \zeta(s) = D^{}_{y^{(1)}}(s)$.  On the other hand,
Lemma~\ref{dirichFix} gives $D^{}_{y^{(n+1)}}(s) = \zeta(s)
D^{}_{y^{(n)}}(s-1)$, so $D^{}_{y^{(n)}}(s) = D^{}_{h_n}(s)$ for all
$n\geq 1$ and thus $h_n(m) = y^{(n)} (m)$ for all $n,m\geq 1$.
For $m = p_1^{r^{}_1}\cdot\ldots\cdot
p_{k}^{r^{}_k}$, the coefficient $h_n(m)$ can be written as
\[
       h_n(m) = \prod_{j=1}^{k}\prod_{i=1}^{r_j}
         \frac{p_j^{n+i-1}-1}{p_j^i-1}
      = \prod_{j=1}^{k}\gabi{n+r_j-1}{r_j}{p_j} \ts ,
\]
as shown in \cite{Gruber} and \cite{Zou}, leading to the result of
Corollary~\ref{y(m)}. This also provides a concrete interpretation
of the iteration sequences mentioned earlier.
\end{rem}

Similarly to calculating $\fix$-eigenvectors $a^{(m)}$ for $m > 1$,
the sequences arising from $\fix$-iterations on starting sequences
$\delta_m$ with $m > 1$ can be related to the case $m=1$. From
$\bigl(\fix(\delta_m)\bigr)_k = m\cdot \delta_{k,jm}$ for some
$j\in\NN$, one can conclude inductively
\[
     \bigl(\fix^{n}(\delta_m)\bigr)_k = 
      \begin{cases}  m^{n}\! \cdot y^{(n)}\bigl(\tfrac{k}{m} \bigr),       
           &\text{if $m | k$}; \\
        0, &\text{otherwise.}  \end{cases}
\]
Since arbitrary elements from $\mathcal{A}$ can be written as
complex linear combinations of the $\delta_m$, we find the following
behaviour of general arithmetic sequences under $\fix$-iterations.
\begin{lemma}\label{nthIterationGeneral}
       Let $f = (f_1,f_2,\ldots)\in\mathcal{A}$. Then, for all $M\geq 1$,
\[
     \bigl(\fix^n(f)\bigr)_M = \sum_{k=1}^{M}
     f_k\, \bigl(\fix^n(\delta_k)\bigr)_M =
     \sum_{k|M} f_k\, k^{n}  y^{(n)}\bigl(\tfrac{M}{k}\bigr).
\]
As a convolution product, this is
\[
     \fix^n (f) = \bigl(f\cdot N^n \bigr) * y^{(n)},
     \quad \text{for all $n\in\NN$},
\]
where $N^n$ denotes  the arithmetic function defined by $m\mapsto m^n$.
\end{lemma}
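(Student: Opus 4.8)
The plan is to establish the statement in three steps: reduce each coordinate of the iterated image to a genuinely \emph{finite} linear combination of the $\delta_k$, insert the closed form for $\fix^n(\delta_k)$ recorded just above, and then recognise the resulting divisor sum as a Dirichlet convolution.

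The first step rests on the observation that $\fix$ is ``lower triangular'': for every $g\in\mathcal{A}$, the entry $\bigl(\fix(g)\bigr)_M=\sum_{d\mid M}d\,g_d$ depends only on $g_1,\dots,g_M$. Iterating via $\fix^n=\fix\circ\fix^{\,n-1}$, an induction on $n$ shows that $\bigl(\fix^n(g)\bigr)_M$ likewise depends only on $g_1,\dots,g_M$. Hence, splitting $f=\sum_{k=1}^{M}f_k\,\delta_k+\sum_{k>M}f_k\,\delta_k$ and using linearity of $\fix^n$ on the finite initial segment, one obtains $\bigl(\fix^n(f)\bigr)_M=\sum_{k=1}^{M}f_k\,\bigl(\fix^n(\delta_k)\bigr)_M$, which is the first asserted equality. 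I expect this to be the only delicate point: the expansion $f=\sum_k f_k\delta_k$ is formal, and it is the triangular structure — not any topology on $\mathcal{A}$ — that legitimises applying $\fix^n$ term by term for each fixed $M$.

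For the second step I would substitute the formula $\bigl(\fix^n(\delta_k)\bigr)_M=k^n\,y^{(n)}\!\bigl(\tfrac{M}{k}\bigr)$ for $k\mid M$ and $0$ otherwise, which was derived inductively from $\bigl(\fix(\delta_m)\bigr)_k=m\,\delta_{k,jm}$ in the discussion preceding the lemma. This collapses the range $k=1,\dots,M$ onto the divisors of $M$ and yields $\bigl(\fix^n(f)\bigr)_M=\sum_{k\mid M}f_k\,k^n\,y^{(n)}(M/k)$, the second equality.

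The third step is a rewriting: since $f_k\,k^n=(f\cdot N^n)(k)$ by definition of the pointwise product, the divisor sum is exactly $\sum_{k\mid M}(f\cdot N^n)(k)\,y^{(n)}(M/k)=\bigl((f\cdot N^n)\ast y^{(n)}\bigr)(M)$, and as $M\ge1$ was arbitrary this gives $\fix^n(f)=(f\cdot N^n)\ast y^{(n)}$. As a cross-check, the convolution identity also drops out of Lemma~\ref{dirichFix}: iterating it gives $\diri{\fix^n(f)}{s}=\diri{f}{s-n}\prod_{j=0}^{n-1}\zetfun{s-j}$, while $\diri{f\cdot N^n}{s}=\diri{f}{s-n}$ and $\diri{y^{(n)}}{s}=\prod_{j=0}^{n-1}\zetfun{s-j}$, so the two sides carry the same formal Dirichlet series and hence coincide.
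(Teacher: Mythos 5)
Your proof is correct and follows the same strategy as the paper: exploit the triangular structure of $\fix$ (so that $(\fix^n(g))_M$ depends only on $g_1,\dots,g_M$), reduce to a finite linear combination of the $\delta_k$, substitute the closed form $(\fix^n(\delta_k))_M = k^n\,y^{(n)}(M/k)$ derived just before the lemma, and read off the convolution. You spell out the induction and add a Dirichlet-series sanity check that the paper omits, but the underlying argument is the same.
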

\begin{proof}
  Note first that, for all $M\geq 1$, the value $\fix(f)_M$
  depends on the $f_n$ with $n|M$ only.  Each single term of
  $\fix(f)_M$ can thus be obtained by applying $\fix$ to the
  finite linear combination $\sum_{k=1}^M f_k \delta_k$, resulting in
  a component-wise well-defined arithmetic function.  The last
  identity follows from the definition of the convolution product.
\end{proof}
Given the unbounded spectrum of $\fix$, it is not surprising that the
image sequences $\fix^{n}(f)$ of a non-negative $f$ component-wise
tend to infinity when $n\to\infty$.  Nevertheless, the `quotient
sequences' $\left(\frac{\fix^{n+1}(f)_M}{\fix^{n}(f)_M}\right)_n$ have
a well-defined convergence behaviour for each $M$ as follows.

\begin{theorem}
  Let $f$ be a non-negative arithmetic function and let $M\in\NN$ be
  such that $\bigl(\fix^r(f)\bigr)_M \not= 0$ for some $r\in\NN$.
  Then,
\[
      \lim_{n\to\infty} \frac{\bigl(\fix^{n+1}(f)\bigr)_{M}}
        {\bigl(\fix^{n}(f)\bigr)_{M}} = M \ts .
\]
\end{theorem}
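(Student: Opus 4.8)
The plan is to pass from $f$ to the basic iterates $y^{(n)}$ via Lemma~\ref{nthIterationGeneral}, determine the precise growth of each $y^{(n)}(m)$ in $n$, and then exploit the cancellation $k^{n}\,(M/k)^{n}=M^{n}$.

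First I would rewrite the product in Proposition~\ref{y(primePower)} by the substitution $j=r-1-i$ in the numerator and $\ell=i+1$ in the denominator, obtaining
\[
  y^{(n)}(p^{r}) \;=\; \frac{\prod_{j=0}^{r-1}\bigl(p^{\,n+j}-1\bigr)}{\prod_{\ell=1}^{r}\bigl(p^{\ell}-1\bigr)}
  \;=\; \frac{p^{\,r(r-1)/2}}{\prod_{\ell=1}^{r}(p^{\ell}-1)}\;p^{\,rn}\prod_{j=0}^{r-1}\bigl(1-p^{-(n+j)}\bigr).
\]
Two things are then immediate: for every $n\ge 1$ all factors here are positive, so $y^{(n)}(p^{r})>0$; and the last product tends to $1$, so $y^{(n)}(p^{r})/p^{rn}\to c_{p^{r}}:=p^{\,r(r-1)/2}\big/\prod_{\ell=1}^{r}(p^{\ell}-1)>0$ as $n\to\infty$ (this $c_{p^{r}}$ is just $b_{p^{r}}$ of the Remark). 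Since $y^{(n)}$ is multiplicative by Corollary~\ref{y(m)} and only finitely many primes divide a fixed $m$, this upgrades to $y^{(n)}(m)>0$ for all $m\ge 1$, $n\ge 1$, and $y^{(n)}(m)/m^{n}\to c_{m}:=\prod_{j}c_{p_{j}^{r_{j}}}>0$, with the convention $c_{1}=1$.

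Next I would insert this into Lemma~\ref{nthIterationGeneral}: for $n\ge 1$,
\[
  \bigl(\fix^{n}(f)\bigr)_{M} \;=\; \sum_{k\mid M} f_{k}\,k^{n}\,y^{(n)}\!\Bigl(\tfrac{M}{k}\Bigr)
  \;=\; M^{n}\sum_{k\mid M} f_{k}\,\frac{y^{(n)}(M/k)}{(M/k)^{n}},
\]
using $k^{n}(M/k)^{n}=M^{n}$. Because $f\ge 0$ and every $y^{(n)}(m)$ with $n\ge 1$ is positive, the hypothesis $(\fix^{r}(f))_{M}\ne 0$ for some $r$ forces $f_{k_{0}}>0$ for at least one divisor $k_{0}$ of $M$; hence $(\fix^{n}(f))_{M}>0$ for all $n\ge 1$, so the quotient in the statement is well defined. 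Dividing the displayed identity by $M^{n}$ and letting $n\to\infty$, the finite sum converges term by term, giving $(\fix^{n}(f))_{M}/M^{n}\to S:=\sum_{k\mid M} f_{k}\,c_{M/k}$, and $S\ge f_{k_{0}}c_{M/k_{0}}>0$.

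Finally, writing $\dfrac{(\fix^{n+1}(f))_{M}}{(\fix^{n}(f))_{M}}=M\cdot\dfrac{(\fix^{n+1}(f))_{M}/M^{n+1}}{(\fix^{n}(f))_{M}/M^{n}}$ and applying the previous limit separately to numerator and denominator yields the value $M$. The only delicate point is the strict positivity of $S$, i.e.\ that the denominators do not eventually vanish, which is precisely where the assumption on $M$ enters; everything else is the elementary asymptotics of the Gaussian-binomial formula for $y^{(n)}(p^{r})$ together with multiplicativity.
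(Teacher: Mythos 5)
Your proof is correct, and it takes a somewhat different route from the paper even though it rests on the same two ingredients (Lemma~\ref{nthIterationGeneral} and Proposition~\ref{y(primePower)}). The paper writes the quotient as $M + \sum_{M>d\mid M} d\,(\fix^n f)_d/(\fix^n f)_M$ and then shows each ratio $(\fix^n f)_d/(\fix^n f)_M\to 0$ by bounding it above via $\sum_i y^{(n)}(d/d_i)/y^{(n)}(M/d_i)$ and using \eqref{singleFactor}. You instead extract the precise leading constant $y^{(n)}(m)/m^n\to c_m>0$, factor out $M^n$ from Lemma~\ref{nthIterationGeneral} via $k^n(M/k)^n=M^n$, and obtain the sharper statement $(\fix^n f)_M \sim S\,M^n$ with $S=\sum_{k\mid M}f_k\,c_{M/k}>0$, from which the limit $M$ falls out immediately. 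This is cleaner on two points: first, it proves more (an actual asymptotic with an explicit constant, not just the ratio limit); second, it sidesteps the case distinction the paper's bound tacitly requires when some $f_{d_i}=0$ (where $d_i^n f_{d_i} y^{(n)}(M/d_i)$ in the denominator vanishes). It is also worth noticing, as a nice bonus to your identification $c_m=b_m$ with $b=a^{-1}$, that your constant $S=\sum_{k\mid M}f_k b_{M/k}=(f\ast b)(M)=\alpha_M$ is exactly the coefficient of $a^{(M)}$ in the eigenbasis expansion $f=\sum_m\alpha_m a^{(m)}$, so your result $(\fix^n f)_M/M^n\to\alpha_M$ is the natural counterpart (for $\fix$) of the $\orb$ limit computed in the Remark at the end of Section~3, and the hypothesis on $M$ is precisely $\alpha_M\neq 0$ (given $f\geq 0$).
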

\begin{proof}
  Expanding the quotient in question yields
\[
   \frac{\bigl(\fix^{n+1}(f)\bigr)_{M}}{\bigl(\fix^{n}(f)\bigr)_{M}} 
   \, = \, \sum_{d|M} d\
   \frac{\bigl(\fix^{n}(f)\bigr)_{d}}{\bigl(\fix^{n}(f)\bigr)_{M}} 
   \, = \, M + \!\! \sum_{M> d|M} d\
   \frac{\bigl(\fix^{n}(f)\bigr)_{d}}{\bigl(\fix^{n}(f)\bigr)_{M}} .
\]
The number of terms in the last sum being independent of $n$, it
suffices to show that, for each divisor $d$ of $M$, the corresponding
summand approaches $0$ as \text{$n\to\infty$.}  Let $d^{}_1=1, \ldots,
d^{}_R = d$ denote the divisors of $d$. Then, using
Lemma~\ref{nthIterationGeneral}, one gets
\[
  \frac{\bigl(\fix^{n}(f)\bigr)_{d}}{\bigl(\fix^{n}(f)\bigr)_{M}} 
  \, = \, \sum_{i=1}^R \frac{f_{d_i}\
  d_i^n\ y^{(n)}\bigl(\frac{d}{d_i}\bigr)}{\sum_{k|M}f_k\, {k}_{}^{n}
  y^{(n)}\bigl(\frac{M}{k}\bigr)} \,\leq\, \sum_{i=1}^R \frac{d_i^n\
  f_{d_i} y^{(n)}(d/d_i)}{d_i^n\ f_{d_i} y^{(n)}(M/d_i)}  \, = \,
  \sum_{i=1}^{R}\frac{y^{(n)}(d/d_i)}{y^{(n)}(M/d_i)} \ts .
\]
This upper bound can be seen to converge to $0$ by splitting numerator
and denominator of each summand in the last sum into the factors that
arise from the prime decomposition of $d/d_i$ and $M/d_i$.  For $r >
s$, the quotient
\begin{equation}\label{singleFactor}
  \frac{y^{(n)}(p^s)}{y^{(n)}(p^r)}=
   \prod_{i=0}^{s -1}\frac{1-p^{n+s-1-i}}{1-p^{n+r-1-i}}
   \prod_{i=s}^{r-1}\frac{1-p^{i+1}}{1-p^{n+r-1-i}}
\end{equation}
converges to $0$ as $n\to\infty$, since the first product tends to
$p^{s-r} > 0$, while the second tends to $0$.  Being composed 
of factors such as \eqref{singleFactor}, the quotient 
$\bigl(\fix^n(f)\bigr)_{d} \, /\bigl(\fix^n(f)\bigr)_M $ tends to
$0$ for all $d\ts |M$ as ${n\to\infty}$, which proves the claim.
\end{proof}
\begin{rem}
  Unlike $\fix$, the operator $\orb$'s spectrum is bounded by $0$ from
  below and by $1$ from above.  One would expect $\orb^n(f)$ to
  converge to an element of the eigenspace for the largest eigenvalue
  ($\lambda = 1$), hence to a scalar multiple of $a^{(1)}=a$. Indeed,
  employing the eigenvector basis representation $f = \sum_{m\geq 1}
  \alpha^{}_m a^{(m)}$, one obtains
\[
  \lim_{n\to\infty} (\orb^n f)^{}_M = \lim_{n\to\infty}
  \sum_{m=1}^{M} \alpha^{}_m \orb^n(a^{(m)})_M = 
  \lim_{n\to\infty}\sum_{m=1}^{M}\alpha^{}_m \frac{1}{m^n} a^{(m)}_{M}
  = \alpha^{}_1 a^{(1)}_M
\]
for all $M\in\NN$. In other words, $\orb^n(f) 
\xrightarrow{\; n\to\infty \; } f^{}_1\, a$, 
for all $f\in\mathcal{A}$.
\end{rem}

\section{Concluding remarks}
The eigenvector $a$ has an interesting consequence in the setting of
dynamical (or Artin-Mazur) zeta functions, where one considers the
generating function
\[
\exp\left(\sum_{m=1}^{\infty} \frac{f_m}{m}\ z^m\right) =
\prod_{m=1}^{\infty} (1-z^m)^{-\orb(f)^{}_m},
\]
usually for arithmetic functions $f$ that count (isolated) fixed
points of a discrete dynamical system, see \cite{Fel} for background
material.  Using $a$ together with $a=\orb(a)$, one finds the product
identity
\[
    \exp\left(\sum_{m=1}^{\infty} \frac{a_m}{m}\ z^m\right) =
    \prod_{m=1}^{\infty} (1-z^m)^{-a_m}
\]
or, equivalently, the series identity
\[
    \sum_{m=1}^{\infty}\frac{a_m}{m}\, z^m = 
    -\sum_{m=1}^{\infty}a_m\,\log(1-z^m),
\]
with absolute convergence for $\abs{z} < 1$.  It would be interesting
to know whether this gives rise to another interpretation of the
meaning of the eigenvector $a$\ in this setting.

\subsection*{Acknowledgements}
It is our pleasure to thank Tom Ward for his interest in this work and
to John A.\ G.\ Roberts and Johan Nilsson for useful hints on the
manuscript.  This work was supported by the German Research Council
(DFG), within the CRC 701.

\bigskip


\end{document}